\documentclass[11pt]{amsart}
\usepackage{amsmath}
\usepackage{amssymb}
\usepackage{epsfig}
\usepackage{a4wide}

\newcommand{\Z}{\ensuremath{\mathbb{Z}}}
\newcommand{\R}{\ensuremath{\mathbb{R}}}

\newcommand{\Q}{\ensuremath{\mathbb{Q}}}
\newcommand{\X}{\ensuremath{\mathbb{X}}}

\newcommand{\C}{\ensuremath{{\mathbb C}}}
\newcommand{\T}{\ensuremath{{\mathcal T}}}

\renewcommand{\rho}{\varrho}

\newtheorem{thm}{Theorem}[section]

\newtheorem{prop}[thm]{Proposition} 

\newtheorem{defi}[thm]{Definition}

\parskip1ex
\parindent0em

\begin{document}

\title{Substitution Tilings with Statistical Circular Symmetry}

\author{Dirk Frettl\"oh}
\address{Fakult\"at f\"ur Mathematik, Universit\"at Bielefeld, 
Postfach 100131, 33501 Bielefeld,  Germany}
\email{dirk.frettloeh@math.uni-bielefeld.de}
\urladdr{http://www.math.uni-bielefeld.de/baake/frettloe}

\begin{abstract} 
  Two new series of substitution tilings are introduced in which the
  tiles appear in infinitely many orientations. It is shown that
  several properties of the well-known pinwheel tiling do also hold
  for these new examples, and, in fact, for all substitution tilings
  showing tiles in infinitely many orientations. 
\end{abstract} 

\maketitle

\begin{center}
{\em Dedicated to my teacher Ludwig Danzer on the occasion of his 80th
  birthday}
\end{center}

\section{\bf Introduction} \label{intro}
In this article, we introduce two new series of nonperiodic
substitution tilings in 
the plane, where the tiles appear in infinitely many different
orientations. There seems to be a growing interest in such objects,
cf.\ \cite{ors}, \cite{sinv}, \cite{mps}, \cite{bfg}, \cite{yok}. The
standard example is certainly the pinwheel tiling of Conway and Radin
\cite{rad}, and most work about tilings with tiles in infinitely many
orientations is dedicated to this special example.  
It stimulated several re-formulations of concepts
which play a role in the theory of nonperiodic tilings. For instance,
it suggests to define the dynamical system of such a tiling $\T$ as the
closure (in an appropriate topology) of the orbit of $\T$ under the
action of the Euclidean group $E(2)$ of $\R^2$, rather than the
translation group $\R^2$ alone.   

Further examples of tilings showing tiles in infinitely many
orientations are rarely found in the literature.  Sadun
gave a generalization of the pinwheel tiling \cite{sgen}, yielding a
countable number of different substitution tilings with this
property. Apart from this, there are only few examples
known to the author. One is folklore, but nevertheless not widely
known. It is shown in Figure \ref{pw9}. Two more were found by 
Harriss \cite{fh}. The tilings introduced in Sections \ref{sec:pyth} and 
\ref{sec:tipi} of the present article show that the occurrence of
infinitely many orientations in substitution tilings is not
necessarily a rare effect. These tilings include examples of finite
local complexity as well as infinite local complexity (w.r.t.\
Euclidean motions), with an arbitrary number of prototiles, and with
or without primitive substitution matrices. Some of the occurring
substitution factors are Pisot-Vijayaraghavan numbers (including the
smallest one), others are not.  

Section \ref{sec:tilings} states some basic definitions and facts
about substitution tilings. In Section \ref{sec:scs}, we define
statistical circular symmetry of a substitution tiling and prove a
technical result which turns out to be useful in the sequel. 
Section \ref{sec:properties} is dedicated to prove that many
properties of the pinwheel tiling generalise to the whole class of
substitution tilings with tiles in infinitely many orientations. In
particular, these properties are uniform distribution of orientations,
uniform patch frequencies (w.r.t. to the topology used in \cite{ors}
as well as the local rubber topology in \cite{bl}), circular symmetry
of the autocorrelation, and therefore of the diffraction spectrum
\cite{mps}.

\section{\bf Substitution Tilings} \label{sec:tilings}

In the following, $B_r(x)$ denotes the closed ball of radius $r$ around
$x$. A rotation through an angle $\theta$ about the origin
is denoted by $R^{}_{\theta}$. Throughout the text, we will identify
the Euclidean plane with the complex plane, choosing freely the point
of view which fits better to the question at hand.

A {\em tiling} of $\R^d$ is a covering of $\R^d$ with compact sets
--- the {\em tiles} --- which is also a packing of $\R^d$. A tiling
$\T$ is {\em  nonperiodic} if $\T + t = \T$ implies $t=0$. 

Tile substitutions are a simple and powerful tool to generate
interesting  nonperiodic tilings. The basic idea is to give a finite
set of  building blocks --- the {\em prototiles} --- together with a
rule how to enlarge each prototile and then dissect it into copies of
the original prototiles, compare Figures \ref{pw9},
\ref{subs1}, \ref{tipi}. Although the concept applies to arbitrary
dimensions, we restrict ourselves in the following to tilings in the
plane $\R^2$, in order to keep the notation simple. 

Formally, a {\em substitution} $\sigma$ in $\R^2$ is defined for a
collection of prototiles $T^{}_1, \ldots, T^{}_m$ by
$\sigma(T^{}_i) =\{ \varphi^{}_{ijk} (T^{}_j) \,| \,\varphi^{}_{ijk}\in
\Phi^{}_{ij}, j=1 \ldots m \}$, where $\Phi^{}_{ij}$ 
($1 \le i,j \le m$) are sets (possibly empty) of affine maps from
$\R^2$ to $\R^2$. Usually, the maps are of the form $x \mapsto
R^{}_{\alpha} x + t$ for some $\alpha \in [0, 2\pi[$, $t \in \R^2$. 
It is a matter of taste whether one allows also reflections or not. 
For convenience, we will switch between the two concepts. The
particular choice will always be obvious from the context. Two
tiles $T,T' \in \T$ are  {\em of the same type}, if they are congruent
to the same prototile $T_i$. (Sometimes one has to consider tiles
which are congruent but of different types. Then, each tile will get a
label, assigning its type to it. But here, we don't need to consider
such cases.) 
\begin{defi}
A substitution $\sigma$ is called {\em self-similar}, if there is
some $\lambda > 1$ such that for all prototiles $T_i$: 
\[ \lambda T_i = \bigcup_{T \in \sigma(T_i)} T \]
Then, $\lambda$ is called the {\em substitution factor}. 
\end{defi}

Synonyms of substitution factor are {\em inflation factor} or 
{\em length expansion}. 
In the following we consider self-similar substitutions only. 
The substitution $\sigma$ extends in a natural way to all collections
of copies of prototiles: Note that any such collection in the plane
--- and in particular, each tiling ---  
can be represented as $\{ R^{}_{\alpha_1} T^{}_{i_1} +t^{}_1,
R^{}_{\alpha_2} T^{}_{i_2} +t^{}_2, \ldots \}$, where the
$R^{}_{\alpha_i}$ are rotations, $t^{}_i \in \R^2$. The image of 
this set under $\sigma$ is defined as $\{ R^{}_{\alpha_1} \sigma
R^{-1}_{\alpha_1} T^{}_1 + \lambda t^{}_1,  R^{}_{\alpha_2} \sigma
R^{-1}_{\alpha_2} T^{}_2 + \lambda t^{}_2, \ldots \}$, where
$\lambda$ is the substitution factor. In particular, if the set
represents a tiling, this defines the action of $\sigma$ on a tiling. 

For such a tiling $\T=\{ R^{}_{\alpha_1}(T^{}_{i_1})+t^{}_1,
R^{}_{\alpha_2}(T^{}_{i_2})+t^{}_2, \ldots \}$, we assign an angle
$\alpha(T)$ to each tile $T = R^{}_{\alpha} T_i +t$ in $\T$ by
\begin{equation} \label{alpha}
\alpha(T)=\alpha.
\end{equation}   

The following definition helps us to avoid certain pathological cases. 
A {\em patch} is a finite subset of a tiling $\T$, that is, a finite
set of tiles in $\T$. 

\begin{defi} \label{substdef}
Let $\sigma$ be a substitution with prototiles $T^{}_1, \ldots, T^{}_m$ as
defined above. A tiling $\T$ is called {\em substitution tiling} (w.r.t. to
the substitution $\sigma$), if each patch in $\T$ is congruent
to some patch in $\sigma^n(T^{}_i)$ for appropriate $n,i$. \\
The set of all substitution tilings w.r.t. $\sigma$ is called the 
{\em tiling space} $\X_{\sigma}$.  
\end{defi} 

A useful object is the matrix $S^{}_{\sigma}
:=( |\Phi^{}_{ij}| )_{1 \le i,j \le m}$. A substitution $\sigma$ is
called {\em primitive}, if $S^{}_{\sigma}$ is primitive. Recall: 
A nonnegative matrix $M$ is called primitive if there
is some $k \ge 1$ such that $M^{k}$ contains positive entries only.   
By Perron's theorem (\cite{per}, see also \cite{sen}), each primitive
nonnegative matrix has 
a unique eigenvalue which is real, positive, and larger than all other
eigenvalues in modulus. It is not hard to see that the substitution
factor of a primitive self-similar tiling in $\R^2$ is the square root
of the Perron-eigenvalue of the substitution matrix, see for instance
\cite{fre}.  

\section{\bf Statistical circular symmetry} \label{sec:scs}

The following definition helps to simplify terminology. 

\begin{defi}
A substitution tiling $\T$ is called {\em pinwheel-like},
if there are infinitely many different values $\alpha(T)$ for $T \in
\T$, with $\alpha(T)$ defined as in \eqref{alpha}.  
\end{defi}
For a primitive substitution tiling, this is equivalent to requiring
that all copies of each certain prototile occur in infinitely many
orientations in $\T$.  

It is known that the pinwheel tiling fulfils an even stronger
condition, namely, the orientations $\alpha(T)$ of the tiles are
uniformly distributed in $[0,2 \pi[$, see \cite{rad}, \cite{mps}. 
Recall that a sequence $(\alpha^{}_j)_{j \ge 1}$ is called 
{\em uniformly  distributed} in $[0,2\pi[$, if, for all $0 \le x
< y < 2 \pi$,  
\[ \lim_{n \to \infty} \frac{1}{n} \sum_{j=1}^{n} 
  1_{[x, y]}(\alpha^{}_j)  = \frac{x - y}{2 \pi}. \] 
Following \cite{mps}, we will call this property {\em statistical
  circular symmetry}. However, it requires 
some care to define this properly. Since the sum above is not
absolutely convergent, the order of the elements in the sequence
matters. Therefore, one needs to specify how to arrange the tiles 
$T_i \in \T$ in a sequence $(T_i)_{i \ge 0}$. 

\begin{defi}
Let $\T^{}_{\sigma} = \{ T_1, T_2, \ldots \}$ be a primitive
substitution tiling, such that the sequence $(T_j)^{}_{j \ge 1}$
satisfies the following: for all $n \ge 1$, there is some $\ell \ge n$
such that the patch $\{ T_1 \ldots , T_{\ell} \}$ is congruent to
$\sigma^k(T_i)$ for some $k,i$. The tiling
$\T^{}_{\sigma}$ has {\em statistical circular symmetry}, if, for all
$0 \le x < y < 2 \pi$, one has:
\[ \lim_{r \to \infty} \frac{1}{n} \sum_{j=1}^{n} 
  1_{[x, y]}(\alpha(T_j))  = \frac{x - y}{2 \pi}. \] 
\end{defi}

In plain words, the definition ensures that there are infinitely many
$n$ such that $T_1, \ldots, T_n$ are exactly the tiles contained
in some {\em supertile} $\sigma^k(T_i)$. 
Although this definition seems rather technical, it ensures that tiles
(resp.\ the associated angles) are ordered in a natural
way. With respect to the limit, for instance, it is equivalent to
ordering tiles according to their distance from the origin in an
increasing order.  This fact follows from standard properties of
primitive substitution tilings. 

Since the pinwheel tilings are not only pinwheel-like, but also of
statistical circular symmetry, one may argue that the term
'pinwheel-like' is not well chosen. However, Theorem \ref{unifdistr}
below shows the equivalence of pinwheel-likeness and
statistical circular symmetry for primitive substitution tilings, so
the term pinwheel-like becomes obsolete.

The substitution rule shown in Figure \ref{pw9} defines a family of
pinwheel-like tilings. This example is somehow folklore. It was
communicated to the author by L.\ Danzer \cite{dan2}. The substitution
factor is 3, there is only one prototile $T$ --- an isosceles triangle
with edge lengths 1, 2 and 2 --- and the substitution uses only direct
congruences, not reflections. The prototile $T$ is mirror-symmetric
along its vertical axis. Therefore, we equip it with a mark in order
to illustrate the absence of reflections in the substitution rule. The
substituted triangle $\sigma(T)$ contains four copies of $T$ in the
same orientation as $T$, and two copies of $T$ which are rotated by
$\theta=-\arccos(1/4)$.  It is known that $\arccos(1/n) \notin \pi \Q$
for all $n \ge 3$. By Proposition \ref{thetanotinpq} below, the tilings in
this example are pinwheel-like.   

\begin{figure}
\epsfig{file=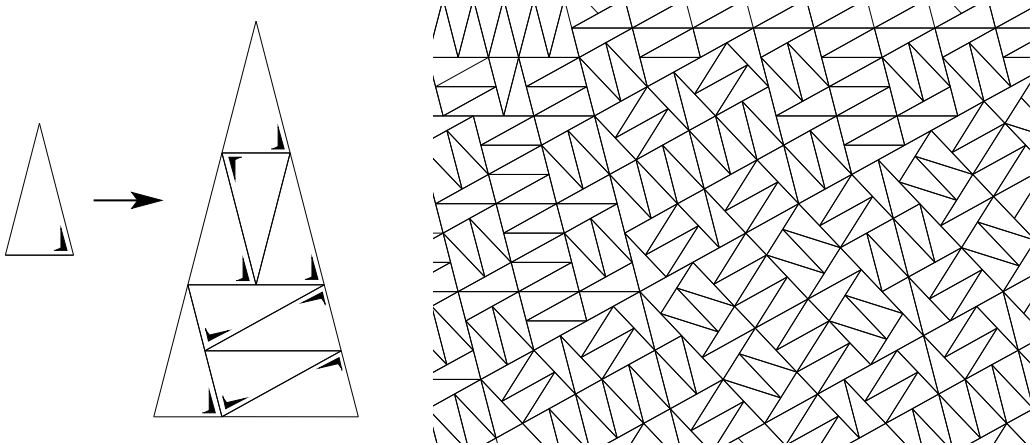}
\caption{A simple substitution rule which generates tilings of
  statistical circular symmetry. \label{pw9}}
\end{figure}

During the last years it, turned out that it is fruitful to consider
tiling spaces or {\em hulls} of tilings rather than particular single
tilings. The {\em hull} of a tiling $\T$ in $\R^d$ is the closure of the
set $\{ \T + x \, | \, x \in \R^d \}$ in the local topology, which is
given by the following metric \cite{so}. Let 
\[ \widetilde{d}(\T,\T') = \inf_{\varepsilon > 0} \{ \varepsilon \, | \, \T
\cap B_{1/\varepsilon}  = R^{}_{\theta} \T' + t, \, \|t\| \le \varepsilon, \,
|\theta| \le \varepsilon \},  \]
and 
\[ d(\T,\T') = \min \{ \frac{1}{\sqrt{2}}, \widetilde{d}(\T,\T') \}. \] 
In plain words, two tilings are close, if they agree on a large ball
(of radius $1/\varepsilon$) around the origin, after a small rotation
$R^{}_{\theta}$ through $\theta$, and after a small translation
$t$. In the case of tilings where all tiles occur only in finitely
many orientations, the rotation part $R^{}_{\theta}$ is usually omitted.
Then, one distinguishes tiles not up to congruence, but up to
translations; the number of prototiles stays finite.  
We should mention that, in the case of tilings with finite local
complexity (see Section \ref{remarks}), this topology is the same as
the local rubber topology \cite{bl} and the local topology
\cite{sch}. For brevity, we don't go into details here.  

The hull of a tiling in $\R^2$, together with the action of the
Euclidean group $E(2) = \R^2 \rtimes O(2)$, is a dynamical system
$\big( \X_{\T}, E(2) \big)$. In this article we consider primitive
substitution tilings only. Note that primitivity ensures that all
types of prototiles occur in some $\sigma^k(T^{}_i)$, for all $i$ and
$k$ large enough. This property is a key for the following theorem,
which is a variant of Gottschalk's theorem, see for instance \cite{rob}. 

\begin{thm}
If $\sigma$ is a primitive substitution, the hull $\X^{}_{\T}$ of each
substitution tiling $\T$ in $\X_{\sigma}$ is $\X_{\sigma}$
itself. Equivalently, the dynamical system $\big( \X_{\T}, E(2) \big)$
is minimal. 
\end{thm}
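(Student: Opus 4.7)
The plan is to prove the set-theoretic equality $\X_\T = \X_\sigma$; the equivalent minimality of $(\X_\T, E(2))$ then follows, since $\X_\sigma$ is $E(2)$-invariant (rotating all tiles preserves congruence classes of patches), and the equality applied at each $\T' \in \X_\sigma$ yields that every translation orbit, \emph{a fortiori} every $E(2)$-orbit, is dense in $\X_\sigma$.

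For the easy inclusion $\X_\T \subseteq \X_\sigma$, I would show that $\X_\sigma$ is both closed in the local topology and translation-invariant, and then invoke $\T \in \X_\sigma$. Closedness: if $\T_k \in \X_\sigma$ converges to $\T''$, then any finite patch $P$ of $\T''$ agrees with a patch of some $\T_k$ up to an arbitrarily small Euclidean motion, which in turn is congruent to a sub-patch of some $\sigma^n(T_i)$; thus so is $P$, and $\T'' \in \X_\sigma$. Translation-invariance is immediate from the definition.

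For the reverse inclusion $\X_\sigma \subseteq \X_\T$, fix $\T' \in \X_\sigma$ and $\varepsilon > 0$; the target is a translate $\T + x$ with $d(\T + x, \T') \le \varepsilon$. Let $P$ be the patch of $\T'$ covering $B_{1/\varepsilon}(0)$; by definition of $\X_\sigma$, $P$ is congruent to a sub-patch of some $\sigma^n(T_i)$. Primitivity supplies an $N$ such that $\sigma^N(T_j)$ contains a tile of type $i$ for every $j$, so $\sigma^{N+n}(T_j) = \sigma^n(\sigma^N(T_j))$ contains a congruent copy of $\sigma^n(T_i)$, hence of $P$. On the other hand, $\T$ itself contains large supertiles up to congruence: by recognisability of primitive self-similar substitutions one has $\T = \sigma^{N+n}(\widetilde{\T})$ for some $\widetilde{\T} \in \X_\sigma$, so every tile of $\widetilde{\T}$ of type $j$ gives rise to a congruent copy of $\sigma^{N+n}(T_j)$, and therefore of $P$, inside $\T$. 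Let $g \in E(2)$ be an isometry sending $P$ to such a copy. Repetitivity, a standard consequence of primitivity, distributes the possible translational parts of $g$ with bounded gaps, and in the pinwheel-like setting of this paper the rotational parts $R_\theta$ of $g$ are dense in $SO(2)$. Choosing $g$ with $|\theta| \le \varepsilon$ and then translating $\T$ so that $g(P)$ sits over $P$ yields $\T + x$ with $(\T + x) \cap B_{1/\varepsilon}(0) = R_\theta \T' + t$ for some $\|t\| \le \varepsilon$, so $d(\T + x, \T') \le \varepsilon$.

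The main obstacle is the density of rotational parts in the final step: this is where the pinwheel-like hypothesis enters, for without it one recovers only finitely many orientations and the equality $\X_\T = \X_\sigma$ must be read modulo the discrete rotation group of the substitution. The recognisability input, classical for primitive self-similar substitutions, is the other delicate point; it could be replaced by a compactness argument showing that the sequence $\sigma^k(T_i)$ accumulates at every tiling in $\X_\sigma$.
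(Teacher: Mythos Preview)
The paper does not give its own proof of this theorem: it is stated without proof and attributed as ``a variant of Gottschalk's theorem, see for instance [Rob]''. So there is nothing to compare your argument to line by line; what follows are comments on the argument you supply.

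Your outline is sound in the setting the paper actually cares about, namely pinwheel-like $\sigma$, and you are right to flag that the statement as written (hull $=$ closure of the \emph{translation} orbit, while $\X_\sigma$ is defined via congruence classes and is hence $E(2)$-invariant) only goes through under that hypothesis; in the finite-orientation case the equality holds only after quotienting by the finite rotation group of the tiling, exactly as you say. Two technical remarks. First, invoking recognisability to produce large supertiles inside $\T$ is heavier than necessary and imports an aperiodicity hypothesis you have not stated: the definition of $\X_\sigma$ already gives you, for any radius $R$, a patch of $\T$ covering $B_R(0)$ that is congruent to a patch inside some $\sigma^k(T_i)$, and once $R$ exceeds the diameter of a level-$(N+n)$ supertile this patch must contain a full such supertile. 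Second, the sentence ``in the pinwheel-like setting the rotational parts of $g$ are dense in $SO(2)$'' is exactly the crux and deserves a justification: infinitely many orientations does not \emph{a priori} mean densely many. One route is Proposition~\ref{thetanotinpq} of the paper (which appears just after this theorem and does not depend on it): it produces an angle $\theta\notin\pi\Q$ between two same-type tiles in a supertile, whence copies of any fixed patch $P$ occur in $\T$ at orientations $\alpha_0+n\theta$ for all $n\ge 0$, and these are dense in the circle. Alternatively, observe that the differences of orientations form a subgroup of $\R/2\pi\Z$, and an infinite subgroup of the circle is automatically dense. Either way, this is the step that carries the weight, and your proof would be complete once it is spelled out.
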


Consequently, in the case of primitive substitutions, the dynamical
system $\big( \X_{\sigma}, E(2) \big)$ has a unique meaning. 

The following result is needed for the proofs of Theorems
\ref{unifdistr} and \ref{pyth-scs}. One direction ('if') is
well-known, the other direction seems to be new and is necessary for
the proof of Theorem \ref{unifdistr}. 

\begin{prop} \label{thetanotinpq}
Let $\sigma$ be a primitive substitution in $\R^2$ with prototiles
$T^{}_1, \ldots,T^{}_m$. Each substitution tiling $\T^{}_{\sigma}$ is
pinwheel-like, if and only if there are $n,i$ such that
$\sigma^n(T^{}_i)$ contains tiles $T, T'$ of the same type, 
where $\theta = \alpha(T)-\alpha(T') \notin \pi \Q$. 
\end{prop}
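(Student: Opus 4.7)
The plan splits into two directions. For the ``if'' direction I will bootstrap. Suppose $T,T'\in\sigma^n(T_i)$ are of the same type with $\theta:=\alpha(T)-\alpha(T')\notin\pi\Q$. First I reduce to the case where $T,T'$ are of type $i$: if $j$ is their common type, primitivity supplies $N$ with $\sigma^N(T_j)$ containing a copy of $T_i$, hence $\sigma^{n+N}(T_j)$ contains a rotated copy of $\sigma^n(T_i)$ and with it a same-type pair of type $j$ at angle difference $\theta$; replacing $(n,i)$ by $(n+N,j)$ one may assume $T,T'$ are of type $i$. Writing their angles as $\alpha_1$ and $\alpha_1+\theta$, I iterate: $\sigma^{2n}(T_i)=\sigma^n(\sigma^n(T_i))$ contains the two rotated patches $R_{\alpha_1}\sigma^n(T_i)+\cdots$ and $R_{\alpha_1+\theta}\sigma^n(T_i)+\cdots$, which between them exhibit type-$i$ tiles at angles $2\alpha_1$, $2\alpha_1+\theta$, $2\alpha_1+2\theta$. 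Induction on $k$ shows that $\sigma^{kn}(T_i)$ contains type-$i$ tiles at the $k+1$ angles $k\alpha_1+j\theta$, $j=0,\ldots,k$; since $\theta\notin\pi\Q$ they are pairwise distinct modulo $2\pi$, and every $\T_\sigma\in\X_\sigma$ contains patches congruent to such supertiles for all $k$, so $\T_\sigma$ is pinwheel-like.

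For the converse I argue by contrapositive, assuming that in every $\sigma^n(T_i)$ any two same-type tiles have angle difference in $\pi\Q$. Then $\omega_{ijk}\bmod\pi\Q$ is independent of $k$, defining $\omega^*_{ij}\in\R/\pi\Q$; and applying the hypothesis at general $n$ shows that the path sum $g^*_n(i,j):=\omega^*_{ii_1}+\cdots+\omega^*_{i_{n-1}j}$ is independent of the intermediate types $i_1,\ldots,i_{n-1}$. From the composition rule $g^*_{n+m}(i,j)=g^*_n(i,k)+g^*_m(k,j)$, valid for any intermediate $k$ (by primitivity, for $n,m$ large), a short potential-function argument produces constants $a_1,\ldots,a_m,\gamma\in\R/\pi\Q$ with
\[
g^*_n(i,j)\;=\;a_j-a_i+n\gamma \pmod{\pi\Q}.
\]
Choosing real lifts $\tilde a_i,\tilde\gamma\in\R$ I may write $\omega_{ijk}=\tilde a_j-\tilde a_i+\tilde\gamma+\pi q_{ijk}$ with $q_{ijk}\in\Q$; since the finite set $\{q_{ijk}\}$ admits a common denominator $Q$, all $q_{ijk}$ lie in $\tfrac{1}{Q}\Z$.

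The crucial point is that when two tiles $T_1,T_2$ of types $j_1,j_2$ live in the \emph{same} supertile $\sigma^n(T_i)$, their specific angles have the form $(\tilde a_{j_s}-\tilde a_i)+n\tilde\gamma+\pi r_s$ with $r_s\in\tfrac{1}{Q}\Z$, and the $-\tilde a_i$ and $n\tilde\gamma$ contributions cancel:
\[
\alpha(T_1)-\alpha(T_2)\;=\;(\tilde a_{j_1}-\tilde a_{j_2})+\pi(r_1-r_2).
\]
Modulo $2\pi$ this lies in a fixed finite set (at most $2Qm^2$ elements) that is independent of $n$. Finally, for any two tiles $T_0,T\in\T_\sigma$, Definition \ref{substdef} furnishes $n,i$ such that the patch $\{T_0,T\}$ is congruent to a subpatch of $\sigma^n(T_i)$, and congruences preserve angle differences, so $\alpha(T)-\alpha(T_0)$ belongs to the same finite set. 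Fixing $T_0$, the set $\{\alpha(T):T\in\T_\sigma\}$ is finite modulo $2\pi$, contradicting pinwheel-likeness. The main obstacle is the potential-function step giving $g^*_n(i,j)=a_j-a_i+n\gamma$; once that is established, the cancellation in the displayed equation makes it unnecessary to prove separately that $\gamma\in\pi\Q$.
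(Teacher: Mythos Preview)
Your argument is correct. The ``if'' direction is essentially the paper's: both iterate the substitution, using primitivity, to exhibit inside ever larger supertiles tiles of a fixed type whose relative angles run through the multiples $0,\theta,2\theta,\ldots$ of the given irrational $\theta$.

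The converse, however, follows a genuinely different route. The paper argues globally: every angle $\alpha(T)$ lies in the finitely generated $\Z$-module $M=\langle\alpha_1,\ldots,\alpha_n\rangle\subset\R$ spanned by the rotational parts of the maps defining $\sigma$; since $M$ is free of finite rank, the subgroup $M\cap\pi\Q$ is finitely generated and hence cyclic, so modulo $2\pi$ only finitely many angle differences can lie in $\pi\Q$. Pinwheel-likeness forces infinitely many same-type differences, so at least one must be irrational. You instead work modulo $\pi\Q$ from the outset, recognise the well-definedness of the path sums $g^*_n(i,j)$ as a cocycle condition, and trivialise it via a potential $(a_j)$ plus a drift $\gamma$; lifting to $\R$ and clearing denominators then bounds the set of possible differences directly. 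The paper's proof is shorter and rests on one structural fact about finitely generated abelian groups; yours avoids that fact in favour of an explicit, self-contained computation, and as a bonus yields a concrete bound (at most $2Qm^2$ values) on the number of orientation differences when the tiling fails to be pinwheel-like.
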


\begin{proof}
Without loss of generality, let $\alpha(T)= \theta$ and $\alpha(T')=0$.
Here, $T$ and $T'$ are no mirror images of each other. If there
are prototiles which are mirror symmetric, we break the symmetry with
some markings to avoid ambiguities.  

Since $\sigma$ is primitive, there is some $k$ such that a tile of
type $T^{}_i$ is contained in $\sigma^k(T)$ as well as in 
$\sigma^k(T')$. These two tiles are also rotated against each other
by $\theta$. Thus, $\sigma^{n+k+n}(T^{}_i)$ contains two copies of $T$
which are rotated against each other by $2 \theta$. It follows
inductively that there are copies of $T$ rotated against each other by
$n \theta$ for all $n \ge 0$. Since $\theta \notin \pi \Q$, the
values $n \theta \mod \pi$ are pairwise different. Thus the tiles
of type $T$ occur in infinitely many orientations. Since $\sigma$ is
primitive, the same is true for all prototiles $T^{}_j$.

For the other direction, let $\T^{}_{\sigma}$ be pinwheel-like.
We proceed by showing that if there are infinitely many different
angles $\theta = \alpha(T) - \alpha(T')$ as above, at least one of
them is irrational.
Consider the finitely many angles $\alpha_1, \ldots, \alpha_n$
occurring in the rotational parts of maps in the definition of
$\sigma$. Then, all angles $\alpha(T)$ occurring in $\T$ are linear
combinations  
\[ \big( \sum_{i=1}^n \lambda_i \alpha_i \big) \mod 2\pi \qquad
(\lambda_i \in \Z), \]
in other words, they are elements of the finitely generated
$\Z$-module $\langle \alpha_1, \ldots, \alpha_n \rangle$.   
If all $\alpha_i$ are rational angles, we are done, since then there
are only finitely many such linear combinations mod $\pi$, thus only
finitely many angles $\alpha(T)$. If some $\alpha_i$ are irrational,
it is slightly more complicated:  

In order to get rid of the common factor $\pi$, let
$\beta_i=\frac{\alpha_i}{2 \pi}$. Consider the $\Z$-module $M=\langle
\beta_1, \ldots, \beta_n \rangle_{\Z}$. Since it is finitely generated
and torsion-free (the only element of finite order is 0), there is a
basis $\gamma^{}_1, \ldots, \gamma^{}_k$ of $M$, compare 
\cite[Theorem 10.19]{rot}.  
Let $m$ be the smallest positive integer in $M$. (If there is no such
number, then all differences $\beta_i - \beta_j$ $(j \ne i)$ are
irrational, and we are done.) It has a unique representation $m =
\sum_{i=1}^k \lambda^{}_i \gamma^{}_i$. All other integers $\ell$ in $M$ are
integer multiples of $m$. (Otherwise there is a positive integer
$\lambda \ell + \mu m < m$). Let $q = \mbox{gcd}(\lambda_1, \ldots,
\lambda_k)$. Then $\frac{1}{q}$ is the smallest positive rational
number in $M$, and all others are of the form $\frac{p}{q}$, $p \in
\Z$. In particular, there are only finitely many rational numbers mod
1 in $M$, hence only finitely many rational angles
$\theta = \alpha(T)-\alpha(T') \in \pi\Q$. 

But by the definition of pinwheel-likeness there are infinitely many
such  $\theta$. Thus at least one of them is irrational:
There are two tiles rotated against each other by an
angle $\theta \notin \pi \Q$. By definition of a substitution tiling,
there is a super-tile $\sigma^n(T)$ containing these two tiles, and
the claim follows.  
\end{proof}


\section{\bf The Pythia substitutions} \label{sec:pyth}

\begin{figure}
\epsfig{file=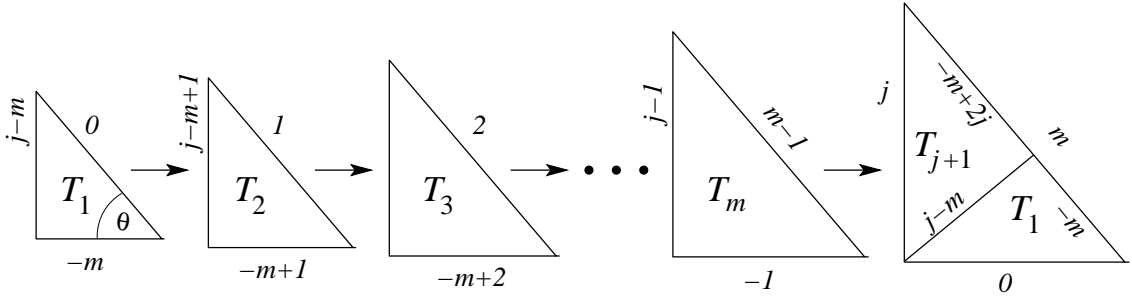, width=150mm}
\caption{The Pythagoras substitution. The edge labels give the edge
  lengths in terms of powers of $\lambda$. For instance, $-m$ is to be
  read as $\lambda^{-m}$. In particular, $0$ means $\lambda^0=1$. 
\label{subs0}}
\end{figure}

First, consider the following family of substitutions (the Pythagoras
substitutions), where the corresponding tilings are {\em not}
pinwheel-like. Let $m \ge 3$ and  
$M$ be the $(m \times m)$-matrix with entries $M^{}_{i+1,i} = 1$ for
$1 \le i \le m-1$; $M^{}_{1,m}=1$; $M^{}_{j+1,m}=1$ for some $1 \le j
\le m-1$, and $M^{}_{i,j}=0$ else. In other words, let $M$ be the
companion matrix of the polynomial $p:=x^m-x^j-1$. For simplicity, we
require $\text{gcd}(m,j)=1$. Then, the primitivity of $M$ can be
shown easily by the methods in \cite{sen}. (If
$\text{gcd}(m,j-1)=q>1$, the substitution is no longer primitive, but
all tilings occurring are already contained in the primitive case.)
Let $\eta$ be the Perron eigenvalue of $M$. By the remark after
Definition \ref{substdef}, $\lambda:= \sqrt{\eta}$ is the substitution
factor of any self-similar substitution with substitution matrix $M$. 

Let $a:=\lambda^{-m}$, and $T_1$ be the orthogonal triangle with
vertices $(0,0),\ (-a,0),\ (-a,\sqrt{1-a^2})$, so its hypotenuse is of
length 1, see Figure \ref{subs0}. This is the first prototile. The
other prototiles are  
$T_{i+1}: = \lambda^i T_1$ for $1 \le i \le m-1$. The substitution is
defined by $\sigma^{}_{m,j} (T_i) = T_{i+1}$ for $1 \le i \le m-1$, and
$\sigma^{}_{m,j} (T_m) = \{ \varphi(T_1), \psi(T_{j+1}) \}$. As indicated
in Figure \ref{subs0}, the substitution acts on $T_m$ by dissecting
$\lambda T_m$ along the altitude on the hypotenuse into tiles of
type $T_1$, $T_{j+1}$. The edge labels in the figure 
indicate the edge lengths in terms of powers of $\lambda$. E.g., $m-1$
is to be read as $\lambda^{m-1}$. Since $\lambda^{2m} =
\lambda^{2j}+1$, it 
follows $1=(\lambda^{j-m})^2 + (\lambda^{-m})^2$ and $\lambda^m =
 \lambda^{j-m+j} + \lambda^{-m}$. The former shows that the triangles
are indeed orthogonal triangles, and inspired the name. The latter
means that the altitude on the hypotenuse indeed dissects
$\lambda T_m$ into $T_1$ and $T_{j+1}$. So the Pythagoras
substitution $\sigma^{}_{m,j}$ is well-defined. 

\begin{figure}
\epsfig{file=konstru.epsi, width=80mm} \qquad 
\epsfig{file=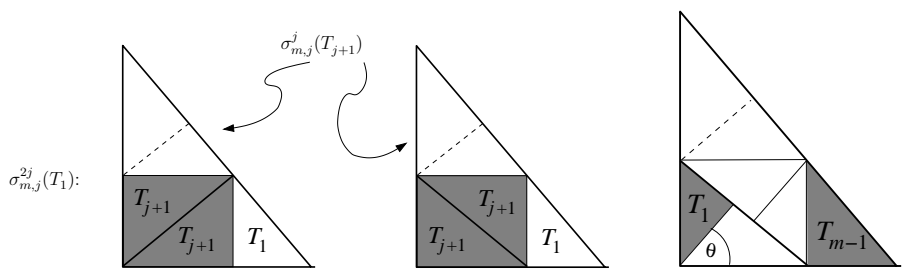,  width=30mm} 
\caption{The construction of the Pythia substitution $\rho^{}_{m,j}$
  out of the Pythagoras substitution $\sigma^{}_{m,j}$. Left: the
  $2j$-th iterate of $T_1$ under $\sigma^{}_{m,j}$. Centre: The same,
  with two tiles flipped. This patch defines the first iterate of
  $T_1$ under $\rho^{}_{m,j}$. Right: The first iterate of $T_{m-j+1}$,
  used in the proof of Theorem \ref{pyth-scs}. \label{konstr}}
\end{figure}

In order to obtain a pinwheel-like substitution, consider
$(\sigma^{}_{m,j})^{2m}(T_1)$ (see Figure \ref{konstr}, left). One
diagonal of the grey rectangle, consisting of two tiles $T_{j+1}$, is
the altitude on the hypotenuse of the large triangle. The new
substitution $\rho^{}_{m,j}$ arises by choosing the other diagonal 
of the rectangle, Figure \ref{konstr} (centre). This defines the
substitution $\rho^{}_{m,j}(T_1)$ of $T_1$. The concrete maps used by
$\rho^{}_{m,j}(T_1)$ can be obtained from the figure and the
corresponding maps in $\sigma^{}_{m,j}$. The substitution for the
other prototiles is defined by 
\begin{equation} \label{defrho}
\rho^{}_{m,j}(T_i) := (\sigma^{}_{m,j})^{i-1} \big(
\rho^{}_{m,j}(T_1) \big), \quad 2 \le i \le m. 
\end{equation}
Since $\sigma^{}_{m,j}$
is well-defined, $\rho^{}_{m,j}$ is, too. Since the substitution
factor of the Pythagoras substitution $\sigma_{m,j}$ is $\lambda$, and
the Pythia substitution arises from $2m$ iterations of
$\sigma_{m,j}$, the substitution factor of 
the Pythia substitution $\rho^{}_{m,j}$ is $\lambda^{2m}$. The
particular Pythia substitution $\rho^{}_{3,1}$ is shown in
Figure \ref{subs1}. The proof that all Pythia substitutions
$\rho^{}_{m,j}$ generate pinwheel-like tilings is given in Theorem
\ref{pyth-scs}.

\begin{figure}
\epsfig{file=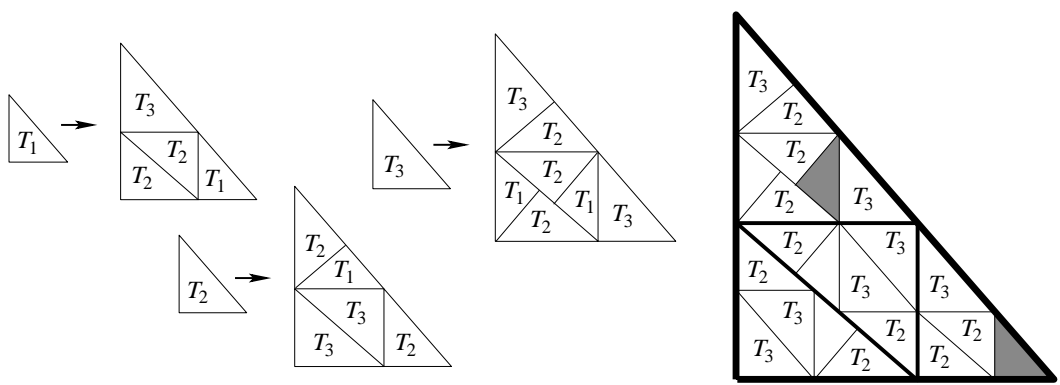,  width=140mm}
\caption{The substitution rule for the Pythia substitution
  $\rho^{}_{3,1}$ (left), and the second iterate of $T_1$ (right). The
  two grey shaded tiles are both of type $T_1$, rotated against each
  other by an angle $\theta \notin \pi \Q$. \label{subs1}}
\end{figure}

\section{\bf The tipi substitutions} \label{sec:tipi}

Let $\eta^{}_{m,j}$ be the root of $x^m-x^{2j}-2x^j-1$, $3 \le m$,
$1 \le j < m/2$, $a=a^{}_{m,j}=(\eta^{}_{m,j})^j$,
$\theta:=\arccos(1/2a)$. 
As above, $m$ is always the number of prototiles of the
tiling under consideration. The first prototile, $T^{}_0$, is an
isosceles triangle with edges of length $1,a,a$. To be explicit,
let $T^{}_0$ be the triangle with vertices $0,\, 1,\, \frac{1}{2} + i
\sqrt{a^2- \frac{1}{4}}$. The other prototiles are defined recursively by
$T^{}_i:= \eta^{}_{m,j} T^{}_{i-1}$ ($1 \le i \le m-1$).
The substitution rule is $\sigma(T^{}_i):=T^{}_{i+1}$ for $0 \le i \le
m-2$, and $\sigma(T^{}_m):=\{\varphi^{}_0(T^{}_0),\varphi^{}_1(T^{}_j),
\varphi^{}_2(T^{}_j), \varphi^{}_3(T^{}_{2j})\}$; see Figure
\ref{subs1} for the case $j=1, \, m=3$.  The Euclidean motions
$\varphi^{}_i$ can be read off from the figure. Explicitly, they read 
\[ \varphi^{}_0(z)=z+a^2, \, \varphi^{}_1(z)=e^{2 \pi i \theta} z+a^2, \,
\varphi^{}_2(z)= e^{2 \pi i \theta} \overline{z}, \varphi^{}_3(z)=z+
\frac{1}{2} +i\sqrt{a^2- \frac{1}{4}}.   \]
Note that one reflection is involved, expressed by the complex
conjugation in $\varphi^{}_2$.

\begin{figure}
\epsfig{file=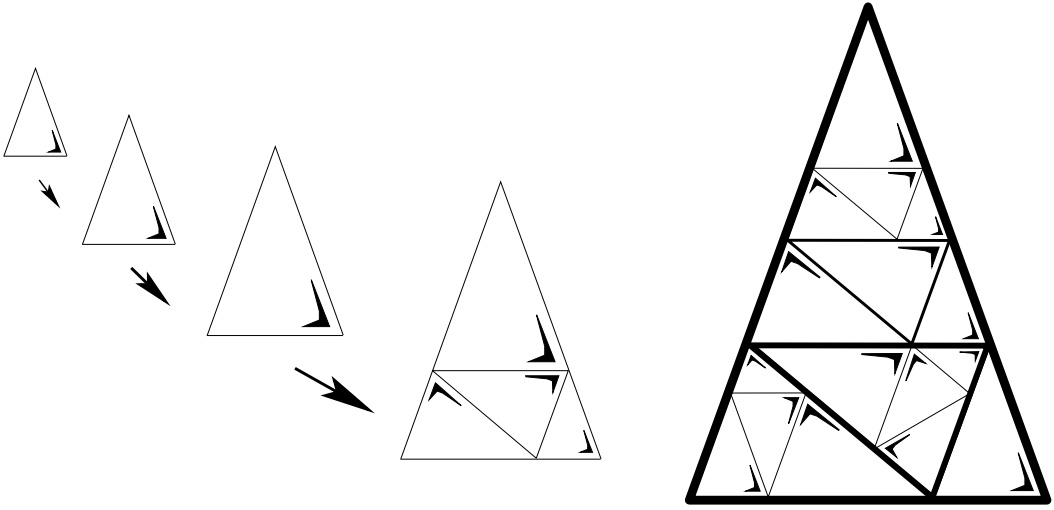, width=140mm}
\caption{The tipi substitution for the case $m=3,\, j=1$. The shape of
  the tiles resembles a tipi, a conical tent used by Sioux and other
  tribes in the Great Plains, thus the name. \label{tipi}}  
\end{figure}

\section{\bf Properties of the Tilings} \label{sec:properties}

The uniform distribution of orientations was shown for the particular
case of the pinwheel tiling in \cite{rad2}, see also \cite{mps}. The
present proof is just a generalisation to the arbitrary case. It uses
Perron's theorem \cite{per}, Weyl's criterion and Proposition
\ref{thetanotinpq}.  

\begin{thm} \label{unifdistr}
Let $\T^{}_{\sigma}$ be a pinwheel-like substitution tiling,
where $\sigma$ is a primitive substitution in $\R^2$ with
prototiles $T^{}_1, \ldots, T^{}_m$. Then $\T^{}_{\sigma}$ is of
statistical circular symmetry. Consequently, $\X^{}_{\sigma}$ is of
statistical circular symmetry, too. 
\end{thm}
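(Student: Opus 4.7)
The plan is to reduce the statement to Weyl's criterion for uniform distribution modulo $2\pi$: it suffices to show, for every $h \in \Z\setminus\{0\}$ and every prototile index $i$,
\[
 \frac{1}{N^{(k)}_i}\sum_{T\in \sigma^k(T_i)} e^{ih\,\alpha(T)} \;\xrightarrow{k\to\infty}\; 0,
\]
where $N^{(k)}_i := |\sigma^k(T_i)|$. The sequence $(T_j)_{j\ge 1}$ in the definition of statistical circular symmetry is chosen precisely so that infinitely many partial patches $\{T_1,\ldots,T_n\}$ are congruent to such supertiles, and primitivity together with Perron's theorem makes those scales cofinal and gives the asymptotics $N^{(k)}_i\sim c_i\,\eta^k$, where $\eta$ is the Perron eigenvalue of $S^{}_\sigma$.

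Next I would encode the orientation data in a \emph{twisted substitution matrix}
\[
 \hat S(h)_{ij} \;:=\; \sum_{\varphi\in\Phi^{}_{ij}} e^{ih\,\alpha(\varphi)},
\]
where $\alpha(\varphi)$ is the rotational part of $\varphi$. Since $\sigma$ acts on a rotated tile $R^{}_\beta T_j$ via $R^{}_\beta\,\sigma\,R^{-1}_\beta$, the angle of each tile in $\sigma^k(T_i)$ is a telescoping sum of $\alpha(\varphi)$'s along an admissible path $(i=i_0,\ldots,i_k)$ through the substitution graph. A straightforward induction on $k$ then yields
\[
 \sum_{T\in \sigma^k(T_i)} e^{ih\,\alpha(T)} \;=\; \bigl(\hat S(h)^k \mathbf 1\bigr)_{i},
\]
up to a global phase from the orientation of $T_i$. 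Since $|\hat S(h)_{ij}|\le (S^{}_\sigma)_{ij}$ entrywise, the Perron-Frobenius/Wielandt comparison tells us that the spectral radius of $\hat S(h)$ is at most $\eta$; if it is strictly smaller, Gelfand's formula gives $(\hat S(h)^k \mathbf 1)_i = o(\eta^k) = o(N^{(k)}_i)$, and Weyl's criterion finishes the proof.

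The hard part is therefore to exclude equality of the two spectral radii for every $h\neq 0$. By Wielandt's equality case, such equality would force $\hat S(h) = \omega\, D S^{}_\sigma D^{-1}$ with some unimodular $\omega$ and unitary diagonal $D=\mathrm{diag}(d^{}_1,\ldots,d^{}_m)$. Because each entry $\hat S(h)_{ij}$ is a sum of $(S^{}_\sigma)_{ij}$ unit-modulus numbers whose total modulus already reaches $(S^{}_\sigma)_{ij}$, every summand must coincide with that common phase, i.e., $e^{ih\,\alpha(\varphi)} = \omega\, d^{}_i d^{-1}_j$ for every $\varphi\in\Phi^{}_{ij}$. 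Telescoping along any admissible path of length $n$ from type $i$ to type $j$ then yields $e^{ih\,\alpha(T)} = \omega^n d^{}_i d^{-1}_j$, independent of the path taken. Consequently, any two same-type tiles inside any supertile $\sigma^n(T_i)$ would have angular difference in $(2\pi/h)\Z\subset\pi\Q$, in direct contradiction with Proposition \ref{thetanotinpq}, which provides, thanks to pinwheel-likeness of $\T^{}_\sigma$, two same-type tiles in some supertile whose angular difference lies outside $\pi\Q$. Finally, the statement for $\X^{}_\sigma$ is automatic: by minimality of $(\X^{}_\sigma, E(2))$, every $\T'\in\X^{}_\sigma$ is composed from the same supertiles as $\T^{}_\sigma$ and therefore shares the same orientation statistics.
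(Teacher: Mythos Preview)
Your proof is correct and follows essentially the same route as the paper's: Weyl's criterion, a twisted substitution matrix $\hat S(h)$ (the paper calls it $M(t)$), a Perron--Frobenius/Wielandt comparison of its spectral radius with that of $S^{}_{\sigma}$, and Proposition~\ref{thetanotinpq} to rule out the equality case. Your treatment of the equality case via Wielandt's characterisation $\hat S(h)=\omega D S^{}_{\sigma} D^{-1}$ is a somewhat cleaner formalisation than the paper's more direct argument (which shows that at least one entry of $|M(t)^r|$ is strictly below the corresponding entry of $S^r$ and then invokes strict monotonicity of the Perron eigenvalue), but the underlying idea is identical.
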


\begin{proof}
Weyl's criterion states that $\{ \alpha^{}_j \}_{j \in \Z} = \{e^{i
  \varphi_j}\}_{j \in \Z}$ is uniformly distributed in $[0,2 \pi[$, iff 
\[ \lim_{n \to \infty} \frac{1}{n} \sum_{j=1}^{n} (\alpha_j)^t = 0 \]
for all $0 \ne t \in \Z$. So, for $t \in \Z$, consider the matrix 
\[ M(t) = \biggl( \sum_{j=1}^{S^{}_{k \ell}} e^{i \varphi_j t}
\biggr)_{k \ell} \quad (1 \le k, \ell \le m), \]
where $S$ denotes the substitution matrix and, for $k,
\ell$ given, $\varphi_j$ denotes the orientation of the $j$-th tile of
type $k$ in $\sigma(T_{\ell})$. Thus, $\big( M(1) \big)_{k \ell}$
contains the sum of the orientations of the tiles of type $k$ in
$\sigma(T_{\ell})$. Inductively, it follows that 
$\big( M(1)^r \big)_{k \ell}$ contains the sum of the orientations of
the tiles of type $k$ in $\sigma^r(T_{\ell})$. We proceed to prove 
\[ \lim_{r \to \infty} \frac{(M(t)^r)_{k \ell}}{(S^r)_{k \ell}}=0,\] 
from which uniform distribution follows. 
Without loss of generality, let one occurring orientation be $0$. (The
substitution rule can 
always be modified in order to achieve that, by adding an appropriate
general rotation.) Let $r \ge 1$. Assume 
\[ |\big( M(t)^r \big)_{k \ell}| = |\sum_{j=1}^{(S^r)_{k \ell}} e^{i
  \varphi_jt}| = S_{k \ell}, \]
for some $t >0$. Then, $\varphi_j t \in \pi \Z$ for all $\varphi_j$
considered (that is, for all $\varphi_j$ associated with some $T_k$ in
$\sigma^r(T_{\ell})$). But, by Proposition \ref{thetanotinpq}, 
not all occurring angles $\varphi_j$ are elements of $\pi \Q$. Thus,
for all $r,t \ge 1$, there is $(k,\ell)$ such that 
\[ | \sum_{j=1}^{(S^r)^{}_{k \ell}} e^{i \varphi_j t} | <
(S^r)_{k \ell}. \] 
Let $\lambda$ be the substitution factor of the substitution (so,
$\lambda^2$ is the Perron-Frobenius eigenvalue of $S$), and
let $\eta$ be the Perron-Frobenius eigenvector of $A:= \big( | (M(t)^r
)_{k \ell}| \big)_{k \ell}$. By Perron's  theorem, 
$0< \lim_{r \to  \infty}  \frac{(S^r)_{k \ell}}{(\lambda^2)^r} =
c^{}_{k \ell}$, with a constant $0< c^{}_{k\ell} < \infty$. (More
precisely, each vector $(c^{}_{k1}, \ldots, c^{}_{km})^T$ is a right
eigenvector of $S$ for the Perron-Frobenius eigenvalue $\lambda^2$ of
$S$, and each vector $(c^{}_{1 \ell}, \ldots, c^{}_{m \ell})$ is a left
eigenvector for $\lambda^2$, and all vectors are positive.)
Analogously, $0< \lim_{r \to \infty} 
\frac{(A^r)_{k \ell}}{\eta^r} = a^{}_{k \ell} < \infty$.
Since $A \le S^r$ and $A \ne S^r$, it follows $\eta < \lambda^2$,
again by Perron's theorem. Thus, 
\[ \left| \frac{(M(t)^r )_{k \ell}}{(S^r)_{k \ell}} \right| \le
\frac{(A^r)_{k \ell}}{(S^r)_{k \ell}} = \frac{(A^r)_{k
    \ell}}{\eta^r} \frac{\eta^r}{(S^r)_{k \ell}} \le c
\left(\frac{\eta}{\lambda^2} \right)^r \to 0 \quad(r \to \infty). \] 

The claim for the hull $\X_{\sigma}$ follows immediately, since each
element of $\X^{}_{\sigma}$ shows already statistical circular
symmetry.  
\end{proof}

\begin{thm} \label{pyth-scs}
Let $m \ge 3$, and $1 < j <m$. All tilings defined by the Pythia
substitution $\rho^{}_{m,j}$ are of statistical circular symmetry. 
\end{thm}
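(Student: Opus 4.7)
By Theorem~\ref{unifdistr}, to prove statistical circular symmetry for the Pythia substitution it is enough to show (i) that $\rho_{m,j}$ is primitive, and (ii) that its tilings are pinwheel-like. I would treat these two subgoals separately.

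For (i), the key observation is that the flipping of two tiles in the construction of $\rho_{m,j}(T_1)$ does not alter the multiset of tile \emph{types} present: the two new $T_{j+1}$-tiles are merely reflections of the two that were removed. Hence $\rho_{m,j}(T_1)$ has the same tile-type composition as $\sigma_{m,j}^{2m}(T_1)$, and combining this with the recursion $\rho_{m,j}(T_i) = \sigma_{m,j}^{i-1}(\rho_{m,j}(T_1))$ shows that the $i$-th column of the substitution matrix $M_\rho$ of $\rho_{m,j}$ equals $M_\sigma^{2m+i-1}e_1$, where $M_\sigma$ is the substitution matrix of $\sigma_{m,j}$. Since $\sigma_{m,j}$ is already primitive, a sufficiently large power of $M_\rho$ has all entries positive, giving primitivity of $\rho_{m,j}$.

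For (ii), by Proposition~\ref{thetanotinpq} it suffices to exhibit two tiles of the same type in some iterate $\rho_{m,j}^n(T_i)$ whose relative orientation $\theta$ satisfies $\theta \notin \pi\Q$. The natural candidates are a flipped $T_{j+1}$-tile inside the central rectangle and an unflipped $T_{j+1}$-tile from the surrounding patch (or obtained after one further iteration if none is present in $\rho_{m,j}(T_1)$). A short geometric computation using $\lambda^{2m} = \lambda^{2j}+1$ yields rectangle side lengths $\lambda^{j-m}$ and $\lambda^{2j-m}$, and hence the angle $\theta$ between its two diagonals satisfies
\[
\cos\theta \;=\; \frac{1-\lambda^{2j}}{1+\lambda^{2j}}, \qquad e^{i\theta} \;=\; \frac{1+i\lambda^j}{1-i\lambda^j}.
\]
Because $\sigma_{m,j}$ is not pinwheel-like, all orientations it produces form a finite set, so the relative angle between the chosen pair of $T_{j+1}$-tiles is an irrational multiple of $\pi$ precisely when $\theta$ is.

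\textbf{Main obstacle.} The principal difficulty lies in verifying $\theta \notin \pi\Q$, equivalently that $\zeta := (1+i\lambda^j)/(1-i\lambda^j)$ is not a root of unity. My preferred route is via Kronecker's theorem: if $\zeta$ were a root of unity, every Galois conjugate of $\zeta$ over $\Q$ would lie on the unit circle. Since for $m \ge 3$ the polynomial $y^m - y^j - 1$ has non-real roots, $\lambda$ has a non-real Galois conjugate $\lambda'$; a short calculation with moduli shows that $|(1+i(\lambda')^j)/(1-i(\lambda')^j)| \ne 1$ whenever $(\lambda')^j \notin \R$. The technical point absorbing the bulk of the work is to ensure that for every admissible pair $(m,j)$ at least one non-real conjugate $\lambda'$ of $\lambda$ satisfies $(\lambda')^j \notin \R$, ruling out the degenerate possibility that every non-real Galois conjugate of $\lambda$ has argument in $(\pi/j)\Z$. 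Once $\theta \notin \pi\Q$ is established, Proposition~\ref{thetanotinpq} yields pinwheel-likeness, and Theorem~\ref{unifdistr} then completes the proof.
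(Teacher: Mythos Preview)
Your overall strategy matches the paper's: reduce to Proposition~\ref{thetanotinpq} and Theorem~\ref{unifdistr}, so that the whole task is (i) primitivity and (ii) exhibiting two equally-typed tiles whose relative angle is an irrational multiple of~$\pi$. Your treatment of (i) is fine and agrees with the paper in spirit.

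For (ii), two comments. First, your choice of the comparison pair is looser than the paper's. The flipped $T_{j+1}$ in the rectangle is a \emph{reflected} copy, so it is not ``of the same type'' as an unflipped $T_{j+1}$ in the sense required by Proposition~\ref{thetanotinpq} (the paper explicitly works modulo direct congruences). The paper handles this by tracking chirality carefully: it distinguishes $T_i$ from $\overline{T}_i$ and, after two applications of $\rho_{m,j}$ to $T_{m-j+1}$, locates two copies of $\overline{T}_{j+1}$ rotated by $+\theta$ and $-\theta$ modulo $\tfrac{\pi}{2}$, where $\theta=\arctan(\lambda^{j})$. Their relative angle is $2\theta$, which is exactly the diagonal angle you computed, so the numerics agree; but you should be explicit about chirality before invoking Proposition~\ref{thetanotinpq}.

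Second, and more substantially, your irrationality argument is genuinely different from the paper's and, as you acknowledge, incomplete. You want to show $\zeta=(1+i\lambda^{j})/(1-i\lambda^{j})$ is not a root of unity by producing a Galois conjugate off the unit circle; this forces you to rule out that every non-real conjugate $\lambda'$ of $\lambda$ has $(\lambda')^{j}\in\R$, i.e.\ $\arg\lambda'\in(\pi/j)\Z$. That degeneracy is not obviously excluded uniformly in $(m,j)$, and you leave it open. The paper avoids this entirely by a short algebraic-integer argument: since the minimal polynomial of $\lambda$ has constant term $\pm1$, $\lambda$ is a unit, hence $\lambda^{-m}$ is an algebraic integer. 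Now $\cos\theta=\lambda^{-m}$; if $\theta=k\pi/n$ then $\lambda^{-m}=\tfrac12(\xi+\overline{\xi})$ for a root of unity $\xi$, and the ring of integers of $\Q(\xi+\overline{\xi})$ is $\Z[\xi+\overline{\xi}]$, in which $\tfrac12(\xi+\overline{\xi})$ does not lie. This contradiction gives $\theta\notin\pi\Q$ (hence $2\theta\notin\pi\Q$) cleanly and uniformly, with no case analysis on conjugates. I would recommend replacing your Kronecker-style step by this argument; the rest of your outline then goes through.
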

\begin{proof}
Utilising Proposition \ref{thetanotinpq}, we need to show that there
are two tiles of the same type $T_i$ in some $(\rho^{}_{m,j})^k(T_j)$
which are rotated against each other by an angle $\theta \notin \pi\Q$. 

Consider the Pythia substitution with respect to chirality of the
tiles. Let us call the tiles in Figure \ref{subs0} (without the two 
rightmost ones) right-handed, denoted by $T_i$ as usual; and let us
call their mirror images --- obtained by reflection in the horizontal
axis --- left-handed, denoted $\overline{T}_i$. 
The following properties follow from the construction of the Pythia
substitution $\rho^{}_{m,j}$. 

1. For each $T_i$, $\rho^{}_{m,j}(T_i)$ contains a translate of
$T_{i}$, rotated by $0 \mod \frac{\pi}{2}$. \\
2. $\rho^{}_{m,j}(T_1)$ contains a translate of $\overline{T}_{j+1}$,
rotated by $0 \mod \frac{\pi}{2}$. \\
3. $\rho^{}_{m,j}(T_{m-j+1})$ contains a translate of $T_1$, rotated
by $\theta \mod \frac{\pi}{2}$. \\ 
4. $\rho^{}_{m,j}(T_{m-j+1})$ contains a translate of
$\overline{T}_{j+1}$, rotated by $-\theta \mod \frac{\pi}{2}$. 

The first three statements are immediate consequences of the
construction, see Figure \ref{konstr}. For the fourth one, recall that
the $m$-th iterate of $T_1$ under the Pythagoras substitution
$(\sigma^{}_{m,j})^m$ contains a translate of $\overline{T}_1$,
rotated by $-\theta \mod \frac{\pi}{2}$, see Figure
\ref{subs0}. Consequently,  $(\sigma^{}_{m,j})^m(T_{j+1})$ contains a
translate of $\overline{T}_{j+1}$, 
rotated by $-\theta \mod \frac{\pi}{2}$. By definition,
$\rho^{}_{m,j}(T_{m-j+1})$ contains a translate of
$(\sigma^{}_{m,j})^{j+m-j}(T_{j+1}) = (\sigma^{}_{m,j})^m(T_{j+1})$
(compare Figure \ref{konstr} and \eqref{defrho}), thus it contains a
translate of $\overline{T}_{j+1}$, rotated by $-\theta \mod
\frac{\pi}{2}$. 

Altogether, by 3.\ and 2., $(\rho^{}_{m,j})^2(T_{m-j+1})$ contains a
translate of $\overline{T}_{j+1}$, rotated by $\theta \mod
\frac{\pi}{2}$. By 4.\ and 1., $(\rho^{}_{m,j})^2 (T_{m-j+1})$
contains a translate of $\overline{T}_{j+1}$, rotated by $-\theta \mod 
\frac{\pi}{2}$.  By Proposition \ref{thetanotinpq}, it remains to show
that $\theta - (-\theta) = 2 \theta \notin \pi \Q$. 

We proceed by showing that $\theta \notin \pi \Q$. Consider the tile
$T_1$ embedded in $\C$, with vertices 
$0,\ -\lambda^{-m},\ -\lambda^{-m} + i\lambda^{j-m}$, see Figure
\ref{subs0}. Then $\theta \notin \pi \Q$ iff $-\lambda^{-m} +
i\lambda^{j-m}$ is not a complex root of unity, or equivalently,
$\lambda^{-m} \ne \cos(\frac{k   \pi}{n})$ 
for all $k, n \in \Z$. Recall that $\lambda^2$ is a root
of $x^m-x^j-1$, so $\lambda$ is a root of $p:=x^{2m}-x^{2j}-1$. It is
well known that a polynomial in $\Z[x]$ which is reducible in $\Q(x)$
is already reducible in $\Z[x]$. Thus, no non-integer coefficients can
occur in the factorisation of $p$, wherefore the prime polynomial of
$\lambda$ is of the form $x^{\ell} \pm \cdots \pm 1$. Consequently,
$\lambda$ is an algebraic integer, as well as a unit. 
Consequently, $\lambda^{-1}$ is an algebraic integer as well, as is
$\lambda^{-m}$.

Assume that $\lambda^{-m} = \cos(\frac{k \pi}{n})$ for some $k,n \in \Z$,
where $\text{gcd}(k,n)=1$. Since $\lambda^{-m} \notin \{-1,0,1\}$, we can
exclude $k=0,\ n=1$ and $n=2$. Let $\xi^{}_n = e^{2 \pi i \frac{k}{n}}$ be a
primitive $n$-th root of unity. Then, $\lambda^{-m} = \frac{1}{2}
(\xi^{}_n + \overline{\xi^{}_n})$. 


On the other hand, it is known that the ring of integers in
$\Q(\xi^{}_n+\overline{\xi^{}_n})$ equals $\Z[\xi^{}_n +
\overline{\xi^{}_n}]$ \cite[Prop. 2.16]{wash}, and all integers in
$\Q(\xi^{}_n + \overline{\xi^{}_n})$ are of the form 
$\sum_{k=0}^{n-1} \beta_k
(\xi^{}_n + \overline{\xi^{}_n})^k$, where $\beta^{}_k \in \Z$. 
But the unique representation of $\lambda^{-m}$ is $\frac{1}{2}
(\xi^{}_n + \overline{\xi^{}_n})$, so $\lambda^{-m}$ is not an
algebraic integer, which is a 
contradiction. Therefore, $\lambda^{-m} \ne \cos(\frac{k \pi}{n})$ for
all $k,n \in \Z$, which proves the claim.
\end{proof}

Note that the relevant angle for the tipi substitution is $\varphi$ with
$\cos(\varphi)= \frac{1}{2} (\eta_{m,k})^{-j}$, and $(\eta_{m,k})^m =
(\eta_{m,k})^j+1$. So the above argument fails in this case: it may be
that $\eta_{m,k} = 2 \cos(\frac{k \pi}{n})$. We don't see a similarly
simple argument for all tipi tilings being of statistical circular
symmetry. Nevertheless, each particular case can be easily checked
whether it is. All cases checked so far are of statistical circular
symmetry.  

\begin{thm} \label{upf}
Every primitive substitution tiling $\T_{\sigma}$ of statistical
circular symmetry shows uniform patch frequency, in the
following sense: For every $\varepsilon >0$ and each finite patch 
$P \subset \T$ there
is $r >0$ such that every ball of radius $r$ contains a
translate of $P$, up to a small rotation $R^{}_{\alpha}$, where
$|\alpha| < \varepsilon$.
\end{thm}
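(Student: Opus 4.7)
My plan combines two ingredients: primitivity of $\sigma$, which guarantees that $P$ appears inside every sufficiently high-level supertile, and a quantitative version of the Weyl-sum estimate used in the proof of Theorem~\ref{unifdistr}, which forces the orientations of these copies of $P$ to form an $\varepsilon$-net in $[0,2\pi)$. Once every supertile at a fixed level is known to contain a copy of $P$ at an orientation within $\varepsilon$ of the target, the conclusion reduces to the standard fact (repetitivity up to congruence) that every sufficiently large ball in a primitive substitution tiling contains a full supertile at any prescribed level.

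By Definition~\ref{substdef}, there exist $N,i$ such that $P$ is congruent to a sub-patch of $\sigma^N(T_i)$, sitting at some fixed orientation offset $\beta$ relative to the reference orientation of $T_i$. By primitivity there is $K_0$ such that for all $K\ge K_0$ and every $\ell$, the supertile $\sigma^{N+K}(T_\ell)$ contains $(S^K)_{i\ell}$ copies of $P$, one from each type-$T_i$ tile $T$ in $\sigma^K(T_\ell)$, with orientation $\alpha(T)+\beta$. The proof of Theorem~\ref{unifdistr} yields, uniformly in $\ell$ and for each $t\in\Z\setminus\{0\}$, the exponential bound $|(M(t)^K)_{i\ell}/(S^K)_{i\ell}|\le c(\eta/\lambda^2)^K\to 0$ as $K\to\infty$. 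By a quantitative Weyl criterion (for instance the Erd\H{o}s--Tur\'an inequality), this forces the orientations $\alpha(T)$ of the type-$T_i$ tiles in every $\sigma^K(T_\ell)$ to be $\varepsilon$-dense in $[0,2\pi)$ for $K$ large enough, uniformly in $\ell$. Since $\varepsilon$-density is preserved by a global rotation, every congruent copy of $\sigma^{N+K}(T_\ell)$ in $\T$, in whatever orientation it appears, already contains a translate of $R_\alpha P$ with $|\alpha|<\varepsilon$.

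Fix such a $K$ and set $D:=\max_\ell \mathrm{diam}\bigl(\sigma^{N+K}(T_\ell)\bigr)$. Since $\T$ is a primitive substitution tiling, repetitivity up to congruence (a standard consequence of primitivity) guarantees an $r=r(K)\ge 3D$ such that every ball $B_r(x)\subset \R^2$ contains a full congruent copy of some $\sigma^{N+K}(T_\ell)$. By the previous paragraph, that supertile in turn contains a translate of $R_\alpha P$ with $|\alpha|<\varepsilon$, completing the proof.

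The main technical obstacle is passing from the \emph{asymptotic} statement of statistical circular symmetry to a \emph{uniform}, ball-by-ball statement. This transfer succeeds only because the exponential Weyl bound in Theorem~\ref{unifdistr} is uniform over the supertile index $\ell$; without this uniformity one could conclude $\varepsilon$-density of orientations only in an averaged sense, not inside every individual supertile, and the local repetitivity conclusion would fail.
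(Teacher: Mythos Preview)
Your argument is correct and reaches the same endpoint as the paper, but the central step is obtained differently. The paper does not invoke the exponential Weyl bound from Theorem~\ref{unifdistr} at all; instead it goes back to Proposition~\ref{thetanotinpq} and uses the single irrational angle $\theta$ produced there. Iterating the substitution, one sees directly that for any $N$ there is a level $\ell$ at which every supertile $\sigma^{\ell}(T)$ contains translates of $R_{n\theta}T_j$ for all $0\le n\le N$; density of $\{n\theta\bmod\pi\}$ then gives the $\varepsilon$-approximation, and a further $M$ iterations plant a copy of $P$ inside each of these rotated $T_j$'s. Your route replaces this explicit construction by the equidistribution machinery: the uniform bound $|(M(t)^K)_{i\ell}/(S^K)_{i\ell}|\le c_t(\eta_t/\lambda^2)^K$ for finitely many $t$, fed into Erd\H os--Tur\'an, forces the orientations of the type-$T_i$ tiles in every level-$K$ supertile to be $\varepsilon$-dense. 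Both approaches yield the same intermediate statement (every high-level supertile, in whatever orientation, contains a copy of $P$ within $\varepsilon$ of the target orientation), after which the final ball argument is identical. Your version is more quantitative and could in principle be made effective in $r(\varepsilon)$, at the price of importing Erd\H os--Tur\'an; the paper's version is entirely elementary and self-contained. One cosmetic point: the constants $c$ and $\eta$ in your displayed bound depend on $t$, so the ``uniform in $\ell$'' claim should be read as uniform in $\ell$ for each fixed $t$, and then one takes $K$ large enough to handle the finitely many $t\le H$ needed for the chosen $\varepsilon$.
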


\begin{proof}
Let $\varepsilon > 0$, and let $P$ be some patch in $\T$. By
Definition \ref{substdef} there is $k \ge 1$ such that a copy of $P$
is contained in some supertile $\sigma^k(T_i)$. By primitivity, there
is  $M > k$ such that a copy of $P$ is contained in all supertiles
$\sigma^m(T)$, $m \ge M$. 

Let $T_j$ be some prototile. 
By the proof of Proposition \ref{thetanotinpq}, there is $\theta
\notin \pi \Q$ such that holds: For all $N \ge 0$ there is 
$\ell \ge 0$ such that each supertile $\sigma^{\ell}(T)$ contains a
translate of $R_{n \theta} T_j$ for all $0 \le n \le N$. 

Every angle $\beta \in [0, 2 \pi[$ can be approximated by $n \theta
\mod \pi$ (with suitable $n \ge 0$) arbitrarily close. 

Let us combine these observations. There is a particular supertile
$\sigma^{\ell}(T_i)$ containing translates of $R_{n \theta} T_j$ for
all $0 \le n \le N$. Thus, there is $s > 0$ such that all supertiles
$\sigma^{\ell + s}(T)$ contain translates of $R_{n \theta} T_j$ for
all $0 \le n \le N$. Then, all supertiles $\sigma^{\ell + s + m}(T)$
contain copies of $P$, in angles $0, \theta, \ldots, N \theta$. The
orientation of $P$ can be approximated arbitrarily close by increasing
$m$. Thus, there are $\ell, s, m$ such that each supertile
$\sigma^{\ell + s + m}(T)$ contains a copy $P'$ of $P$, with $P' =
R_{\alpha} P - t$, $t \in \R^2$, $|\alpha| < \varepsilon$. 
For this particular choice $\ell, s, m$, there is $r'>0$ such that each
supertile $\sigma^{\ell + s + m}(T)$ fits into a ball with radius
$r'$. Thus each ball of radius $r = 3r'$ contains an entire
supertile, which proves the claim.
\end{proof}

\section{Remarks} \label{remarks}

The circular symmetry of $\X_{\sigma}$ implies the circular symmetry
of the autocorrelation (compare for instance \cite{mps}, \cite{bfg})
of its elements, and therefore the circular symmetry of its
diffraction spectrum. This has been known for the pinwheel
tilings. The results in this article imply the circular symmetry of the
diffraction of all primitive substitution tilings with tiles in
infinitely many orientations. 

The relevance of Theorem \ref{upf} relies on the connection to
dynamical systems of a tiling space. In the case of finitely many
orientations (that is, the number of $\alpha(T)$ as in \eqref{alpha}
is finite), the dynamical system $(\X_{\sigma}, \R^2)$ is uniquely
ergodic, if and only if the tilings in $\X_{\sigma}$ have uniform
patch frequency \cite{lms}. This result plays a central role for
further investigations of such systems. The generalization of this
result to tilings with statistical circular symmetry will be worth
investigating in the future. 

A tiling is of {\em finite local complexity} (FLC), if for each $r>0$,
the number of congruence classes of $\T \cap B_r(x)$ ($x \in \R^d$) is 
finite. The Pythia and the tipi substitution tilings introduced here
show both cases: FLC and non-FLC. This can be shown by the methods
in \cite{dan}, \cite {pfr}. 
Note that, in the case of finitely many orientations, FLC is
frequently defined by 'for each $r>0$, the number of {\em translation}
classes of $\T \cap B_r(x)$ ($x \in \R^d$) is finite'. In the present
context we deviate from this convention for obvious reasons. 

The substitution factor of a self-similar substitution determines
already many properties of the corresponding tilings. For
instance, if the substitution factor is a PV number, then the tiling
is FLC under a certain (mild) condition \cite{pfr}. PV number stands for
Pisot-Vijayaraghavan number, an algebraic integer which algebraic
conjugates are all smaller than one in modulus. The substitution
factors of the 
tilings in this article cover various cases. In particular, they are of
arbitrary algebraic degree $m \ge 2$. For $m \ge 3$, this can be seen
by the irreducibility of $x^m-x-1$. For $m=2$, one needs to alter the
setting slightly: The case $m=4,j=2$ does not obey the requirement
gcd$(m,j)=1$. Therefore, the Pythagoras substitution for these values 
is not longer primitive: there are four prototiles $T_1, T_2, T_3,
T_4$, but in each Pythagoras tiling to $\sigma^{}_{4,2}$, only two of
them occur, either $T_1, T_3$, or $T_2, T_4$. The substitution
$(\sigma^{}_{4,2})^2$ does the job: It uses only two prototiles $T_1,
T_3$. The substitution factor is the golden mean $\tau =
\frac{\sqrt{5}+1}{2}$. The corresponding Pythia tiling (see 'golden
pinwheel tilings' in \cite{fh}) has substitution factor $\tau+1$, which
is a quadratic PV number. Other PV numbers occurring as substitution
factors for Pythagoras tilings are the dominant roots of  $x^3-x-1$
(which is the smallest PV number among all algebraic integers \cite{sie}),   
of $x^3-x^2-1$ and of $x^4-x^3-1$. Thus, PV numbers which are
substitution factors of the Pythia tilings include powers of those. 
PV numbers occurring as substitution factors for tipi tilings include
the ones mentioned for the Pythagoras tilings, as well as
the dominant root of $x^3-2x^2+x-1$ (in the case $m=5,j=2$). 

\section*{Acknowledgements}
It is a pleasure to thank Michael Baake and Christian Huck for helpful
discussions. This work was supported by the German Research Council
(DFG) within the Collaborative Research Centre 701.


\begin{thebibliography}{ZZZ9}

\bibitem[{\sc BFG}]{bfg}
M.\ Baake, D.\ Frettl\"oh and U.\ Grimm:
A radial analogue of Poisson's summation formula with applications to
powder diffraction and pinwheel patterns, 
J.\ Geom.\ Phys.\ {\bf 57} (2007) 1331-1343;
{\tt math/0610408}.

\bibitem[{\sc BL}]{bl}
M.\ Baake, D.\ Lenz:
Dynamical systems on translation bounded measures: pure point
dynamical and diffraction spectra,
{\em Ergodic Theory Dyn.\ Syst.} {\bf 24} (2004) 1867-1893;
{\tt math/0302061}.

\bibitem[{\sc Dan1}]{dan2}
L.\ Danzer, private communication (1999).

\bibitem[{\sc Dan2}]{dan}
L.\ Danzer:
Inflation species of planar tilings which are not of locally finite
complexity,
{\em Proc.\ Steklov Inst.\ Math.} {\bf 239} (2002) 118-126.

\bibitem[{\sc FrR}]{pfr}
N.\ Priebe Frank, E.\ A.\ Robinson, Jr.:
Generalized beta-expansions, substitution tilings, and local
finiteness, 
preprint, 2005; 
{\tt math/0506098}. 

\bibitem[{\sc F}]{fre}
D.\ Frettl\"oh:\
Duality of model sets generated by substitutions,
\emph{Rev.\ Roumaine Math.\ Pures Appl.} {\bf 50} (2005) 619-639;
{\tt math/0601064}.

\bibitem[{\sc FH}]{fh}
  D.\ Frettl\"oh, E.\ Harriss:
  Tilings encyclopedia, available online at:\\
  \texttt{http://tilings.math.uni-bielefeld.de.}

\bibitem[{\sc LMS}]{lms}
J.-Y.\ Lee, R.V.\ Moody and B.\ Solomyak:
Pure point dynamical and diffraction spectra,
\textit{Ann.\ Henri Poincar\'{e}} \textbf{3} (2002) 1003--1018;
\texttt{mp\_arc/02-39}.

\bibitem[{\sc MPS}]{mps}
R.V.\ Moody, D.\ Postnikoff and N.\ Strungaru: 
Circular symmetry of pinwheel diffraction,
{\em Ann.\ H.\ Poincar\'e} {\bf 7}, (2006) 711-730.

\bibitem[{\sc ORS}]{ors}
N.\ Ormes, C.\ Radin and L.\ Sadun:
A homeomorphism invariant for substitution tiling spaces, 
{\em Geom.\ Dedicata} {\bf 90} (2002) 153-182; 
{\tt math/0008171}. 

\bibitem[{\sc Per}]{per} 
O.\ Perron: 
Zur Theorie der Matrices, 
{\em  Math. Annalen}, {\bf 64} (1907) 248-263.

\bibitem[{\sc Ra1}]{rad}
C.\ Radin:
The pinwheel tilings of the plane, 
{\em Annals of Math.} {\bf 139} (1994) 661-702.

\bibitem[{\sc Ra2}]{rad2}
C.\ Radin:
 Aperiodic tilings, ergodic theory, and rotations, in:
 {\em The Mathematics of Long-Range Aperiodic Order},
R.V.\ Moody (ed.),  
NATO ASI C  489, Kluwer, Dordrecht  (1997) pp 403-441.

\bibitem[{\sc Rob}]{rob}
E.A.\ Robinson, Jr.:
The dynamical properties of Penrose tilings,
{\em Trans.\ Amer.\ Math.\ Soc.} {\bf 348} (1996), 4447-4464.

\bibitem[{\sc Rot}]{rot}
J.J.\ Rotman:
{\em An Introduction to the Theory of Groups},
Springer, New York (1995).

\bibitem[{\sc Sa1}]{sgen}
L.\ Sadun:
Some generalizations of the pinwheel tiling, 
{\em Discrete Comput.\ Geom.} {\bf 20} (1998) 79-110; {\tt
  math/9712263}. 

\bibitem[{\sc Sa2}]{sinv}
L.\ Sadun:
Tiling spaces are inverse limits, 
{\em J.\ Math.\ Phys.} {\bf 44} (2003) 5410-5414;
{\tt math/0210179}.

\bibitem[\textsc{Sc}]{sch}
  M.\ Schlottmann:
  Generalized model sets and dynamical systems,
  in: \textit{Directions in Mathematical Quasicrystals},
  M.\ Baake and R.V.\ Moody (eds.),
  CRM Monograph Series, vol.\ \textbf{13},
  AMS, Providence, RI (2000) pp 143--159.


\bibitem[{\sc Se}]{sen} 
E. Seneta,
{\em Non-negative Matrices}, 
George Allen \& Unwin Ltd., London (1973).

\bibitem[{\sc Sie}]{sie} 
C.\ L.\ Siegel:
Algebraic integers whose conjugates lie on the unit circle,
{\em Duke Math.\ J.} {\bf  11} (1944) 597-602.

\bibitem[\textsc{So}]{so}
B.\ Solomyak:
Dynamics of self-similar tilings,
\textit{Ergodic Theory Dynam. Systems} \textbf{17} (1997)
695-738. \\
B.~Solomyak:
Corrections to ``Dynamics of self-similar tilings'',
\textit{Ergodic Theory Dynam. Systems} \textbf{19} (1999)
1685.

\bibitem[{\sc W}]{wash}
L.C.\ Washington,
{\em Introduction to Cyclotomic Fields},
Springer, New York (1982).

\bibitem[{\sc Y}]{yok}
T.\ Yokonuma:
Discrete sets and associated dynamical systems in a non-commutative
setting,   
{\em Canad.\ Math.\ Bull.} {\bf 48} (2005)  302-316.

\end{thebibliography}
\end{document}